\newcommand{\at}[1]{{\color{blue}{AT: #1}}} 
\definecolor{red}{rgb}{.5,0,0} 
\definecolor{green}{rgb}{0,.5,0} 
\definecolor{blue}{rgb}{0,0,0.5}
\def\@email#1#2{%
 \endgroup
 \patchcmd{\titleblock@produce}
  {\frontmatter@RRAPformat}
  {\frontmatter@RRAPformat{\produce@RRAP{*#1\href{mailto:#2}{#2}}}\frontmatter@RRAPformat}
  {}{}
}%
\newtheorem{thm}{Theorem}
\newtheorem{lemma}[thm]{Lemma}
\newtheorem{cor}[thm]{Corollary}
\newcommand{\bmu}{\widetilde{\mu}}
\begin{document}

\preprint{AIP/123-QED}

\title[Synchronization of dense networks]{Sufficiently dense Kuramoto networks are globally synchronizing}

\author{Martin Kassabov}
\altaffiliation{Department of Mathematics, Cornell University, Ithaca, NY 14853}%
\author{Steven H. Strogatz}
\altaffiliation{Department of Mathematics, Cornell University, Ithaca, NY 14853}%
\author{Alex Townsend}
\altaffiliation{Department of Mathematics, Cornell University, Ithaca, NY 14853}%

\date{\today}

\begin{abstract}
Consider any network of $n$ identical Kuramoto oscillators in which each oscillator is coupled bidirectionally with unit strength to at least $\mu (n-1)$ other oscillators. There is a critical value of the connectivity, $\mu_c$, such that whenever $\mu>\mu_c$, the system is guaranteed to converge to the all-in-phase synchronous state for almost all initial conditions, but when $\mu<\mu_c$, there are networks with other stable states. The precise value of the critical connectivity remains unknown, but it has been conjectured to be $\mu_c=0.75$. In 2020, Lu and Steinerberger proved that $\mu_c\leq 0.7889$, and Yoneda, Tatsukawa, and Teramae proved in 2021 that $\mu_c > 0.6838$.  In this paper, we prove that $\mu_c\leq 0.75$ and explain why this is the best upper bound that one can obtain by a purely linear stability analysis. 
\end{abstract}

\maketitle

\begin{quotation}
The Kuramoto model of coupled oscillators offers an ideal playground for exploring how the structure of a network affects the dynamics it can display. In that spirit, we consider the simplest version of the model, in which each oscillator is assumed to have the same intrinsic frequency. The oscillators are coupled with unit strength along the edges of an undirected network that is connected but otherwise has an arbitrary topology. Even in this minimalist setup, mysteries abound. Specifically, what level of connectivity ensures that the system will almost always settle into a state of perfect synchrony, with all the oscillators running in phase? Here we prove that if every oscillator is connected to at least $75\%$ of the others, the system globally synchronizes, regardless of the details of its wiring diagram. Our proof uses trigonometric identities and optimization arguments to derive inequalities (involving the first two moments of the oscillator phase distribution) that must hold for any stable phase-locked state. If it is possible to improve the bound $\mu_c\leq 0.75$ further, then one must contend with a remarkable sequence of networks that lie on the so-called razor's edge of stability. The networks in this sequence have connectivities that approach the upper bound of 0.75 from below, yet have twisted phase-locked states whose eigenvalues are either negative or zero; as such, linear analysis says nothing either way about their stability. Thus, although our theorem brings us closer to pinpointing the critical connectivity $\mu_c$, a quantity that has recently attracted intense theoretical interest, it does not settle the question. There still remains a large gap between the best known upper and lower bounds on $\mu_c$, and we are starting to suspect that this gap cannot be closed by linear stability analysis on its own. Indeed, if $\mu_c$ turns out to be less than 0.75, then nonlinear stability analysis will be needed to prove that this is the case. 
\end{quotation}

\section{\label{sec:level1}Introduction}
Coupled oscillators often synchronize spontaneously. This phenomenon appears throughout nature and technology, from flashing fireflies and neural populations to arrays of Josephson junctions and nanoelectromechanical oscillators.~\cite{winfree1967biological,peskin1975mathematical, kuramoto1984chemical, mirollo1990synchronization, watanabe1994constants, pikovsky2003synchronization, matheny2019exotic} 

Among the many questions raised by synchronization, one of the most natural asks how network topology can either promote or prevent global synchronization.~\cite{jadbabaie2004stability, wiley2006size,mallada2010synchronization, taylor2012there, dorfler2014synchronization,pecora2014cluster, pikovsky2015dynamics, canale2015exotic, Mehta15, rodrigues2016kuramoto,abrams2016introduction,deville2016phase, sokolov2018sync, ling2018landscape,lu2019synchronization, townsendstillmanstrogatz, Yoneda_2021} We say that a network of oscillators \emph{globally synchronizes} if it converges to a state for which all the oscillators are in phase, starting from all initial conditions except a set of measure zero. Otherwise, we say that the network \emph{supports a pattern}. One expects that dense networks are more inclined to be globally synchronizing, and sparser ones might support waves or more exotic patterns. But trusting intuition here can be dangerous. For example, a network in which any two oscillators are connected by exactly one path (i.e., a tree) can be quite sparse, yet all trees of identical Kuramoto oscillators are known to be globally synchronizing; conversely, there are dense Kuramoto networks that nevertheless support a pattern.~\cite{wiley2006size, canale2015exotic, townsendstillmanstrogatz, Yoneda_2021}

In this paper, we study the homogeneous Kuramoto model introduced by Taylor~\cite{taylor2012there}, in which each oscillator has the same frequency $\omega$. By going into a rotating frame at this frequency, we can set $\omega = 0$ without loss of generality. Then phase-locked states in the original frame correspond to equilibrium states in the rotating frame. So to explore the question that concerns us, it suffices to study the following simplified system of identical Kuramoto oscillators:
\begin{equation} 
\frac{d\theta_j}{dt} = \sum_{k=1}^{n} A_{jk} \sin\!\left(\theta_k - \theta_j\right), \quad 1\leq j\leq n. 
\label{eq:dynamical}
\end{equation} 
Here $\theta_j(t)$ is the phase of oscillator $j$ (in the rotating frame). The network's topology is encoded in the adjacency matrix $A$, with entries $A_{jk} = A_{kj} = 1$ if oscillator $j$ is coupled to oscillator $k$, and $A_{jk} = A_{kj} = 0$ otherwise. It is customary to assume that the network has no self-loops so that oscillator $i$ is not connected to itself, i.e., $A_{ii} = 0$. Thus, all interactions are assumed to be symmetric, equally attractive, and of unit strength.  The adjacency matrix $A$ is symmetric so~\eqref{eq:dynamical} is a gradient system.~\cite{wiley2006size,jadbabaie2004stability} Thus, all the attractors of~\eqref{eq:dynamical} are equilibrium points, which means we do not need to concern ourselves with the possibility of more complicated invariant sets like limit cycles, tori, or strange attractors. 

Taylor~\cite{taylor2012there} proved that the system~\eqref{eq:dynamical} globally synchronizes if each oscillator is coupled to at least 93.95\% of the others. This result started a flurry of research into finding the critical connectivity to guarantee global synchrony. To make this notion precise, we define the \emph{connectivity} $\mu$ of a network of size $n$ as the minimum degree of the nodes in the network, divided by $n-1$, the total number of other nodes. From here, we define the \emph{critical connectivity} $\mu_c$ as the smallest value of $\mu$ such that any network of $n$ oscillators is globally synchronizing. For any network $G$, if $\mu\geq\mu_c$ the network is guaranteed to be globally synchronizing. Otherwise, there exists a network with $\mu < \mu_c$ that supports a pattern. In these terms, Taylor's result is $\mu_c \le 0.9395$. 

The remarkable thing about the critical connectivity is that the wiring of the network can be arbitrary. Recently, Ling, Xu, and Bandeira~\cite{ling2018landscape} strengthened Taylor's result to show that $\mu_c\leq 0.7929$, and Lu and Steinerberger~\cite{lu2019synchronization} further refined the argument to show $\mu_c\leq 0.7889$. On the other hand, some very large dense networks with $\mu> 0.6838$ support a pattern.~\cite{Yoneda_2021} Thus, until the present paper, the best bounds were $0.6838< \mu_c \leq 0.7889$. In this paper, we improve the upper bound to $\mu_c\leq 0.75$.

Before turning to our proof that $\mu_c\leq 0.75$, we comment that any attempt to show $\mu_c<0.75$ with linear analysis (in the style of our argument) is most likely futile. In particular, the argument below cannot be refined to get a better upper bound on $\mu_c$. We know this because a sequence of networks exists with connectivity tending to $75\%$ that lie on the {\em razor's edge} of stability.~\cite{townsendstillmanstrogatz} Linear analysis alone cannot determine whether these networks are globally synchronizing or support a pattern; the difficulty is that the associated Jacobian of certain equilibrium states---the so-called \emph{twisted states}---has multiple zero eigenvalues.~\cite{townsendstillmanstrogatz} Unfortunately, long-time dynamical simulations reveal that these networks are most likely globally synchronizing, but just barely; they seem to avoid supporting a pattern by a whisker. Any potential argument that shows that $\mu_c<0.75$ must contend with this sequence of graphs. 

\section{Self-loops and twinning}\label{sec:twinning}
It is standard to assume that the nodes of the network in~\eqref{eq:dynamical} does not have self-loops. However, the absence of self-loops does cause a few peculiarities, which can be avoided if one assumes that each oscillator is connected to itself. The dynamical system in~\eqref{eq:dynamical} is oblivious to self-loops as when $j=k$ we have $\sin(\theta_k-\theta_j)=0$. Thus, it is only convention to take $A_{ii} = 0$ and the dynamics does not change when one adds self-loops, i.e., $A_{ii} = 1$.  However, self-loops do change how one measures the connectivity of a network as there are now $n$ possible neighbors. When we add self-loops to every node of a network, the connectivity jumps up from $\mu$ to 
\begin{equation} 
\bmu = \frac{\mu (n-1) + 1}{n}, 
\label{eq:newmu}
\end{equation} 
while the evolution of $\theta_1,\ldots,\theta_n$ over time is left unaltered.

There is a process known as twinning that is closely related and shows that the value of the critical connectivity is not affected by the presence or absence of self-loops. Given any graph $G$ (without self-loops) and the complete graph $K_\tau$ on $\tau$ nodes for any integer $\tau$, Canale and Monz{\'o}n~\cite{canale2015exotic} showed that the lexicographic product $G[K_\tau]$ has the same synchronizing behavior. That is, $G$ is globally synchronizing if and only if $G[K_\tau]$ is. And, $G$ supports a pattern if and only if $G[K_\tau]$ does. The graph $G[K_\tau]$ is formed by replacing each node in $G$ by a clique of size $\tau$, and the nodes in different cliques are connected just like the parent nodes they replaced. The twinned graph $G'=G[K_\tau]$ does not have self-loops and is denser than $G$. We find that $\mu' = (\tau-1+\tau \mu (n-1))/(n\tau-1)>\mu$ for any $\tau>1$, where $\mu'$ is the connectivity of $G'$. Thus, any graph $G$ with connectivity $\mu$ can be used as an initial seed to construct a sequence of denser graphs with a limiting edge density of $\left((n-1)\mu +1\right)/n$, where each graph in the sequence has the same synchronizing behavior as $G$ itself.  The limiting edge density of this sequence is $\bmu$: the same connectivity one can achieve by adding self-loops to all the nodes of $G$.

For this reason, the expressions we derive below simplify somewhat when we work with the parameter $\bmu$ in~\eqref{eq:newmu} instead of $\mu$. Our argument below shows that any network with $\bmu>0.75$ is guaranteed to be globally synchronizing (see Theorem~\ref{thm:MainResult}). Using~\eqref{eq:newmu}, this means that any network of size $n$ with connectivity $\mu > (0.75n -1)/(n-1)$ is globally synchronizing. Since $\mu (n-1)$ must be an integer (as it is the minimum degree of a node), we know that the following connectivity is sufficient for global synchrony:
\begin{equation} 
\mu > \frac{1}{n-1}\left\lfloor \frac{3n}{4} -1 \right\rfloor. 
\label{eq:TightestBound} 
\end{equation} 
In Figure~\ref{fig:Bound}, we plot the lower bound in~\eqref{eq:TightestBound} together with the connectivity of the densest known regular networks that support a pattern for $5\leq n\leq 50$.  The bound for $\mu$ in~\eqref{eq:TightestBound} tends to $0.75$, from below, and hence implies that $\mu_c\leq 0.75$.

For the rest of this paper, we consider a network of $n$ identical oscillators with self-loops that has connectivity $\bmu$. If we say that $\theta$ is a stable equilibrium, then we mean that $\theta$ is a stable state with respect to the dynamical system in~\eqref{eq:dynamical} for that network.

\begin{figure}
\centering
\begin{overpic}[width=.49\textwidth]{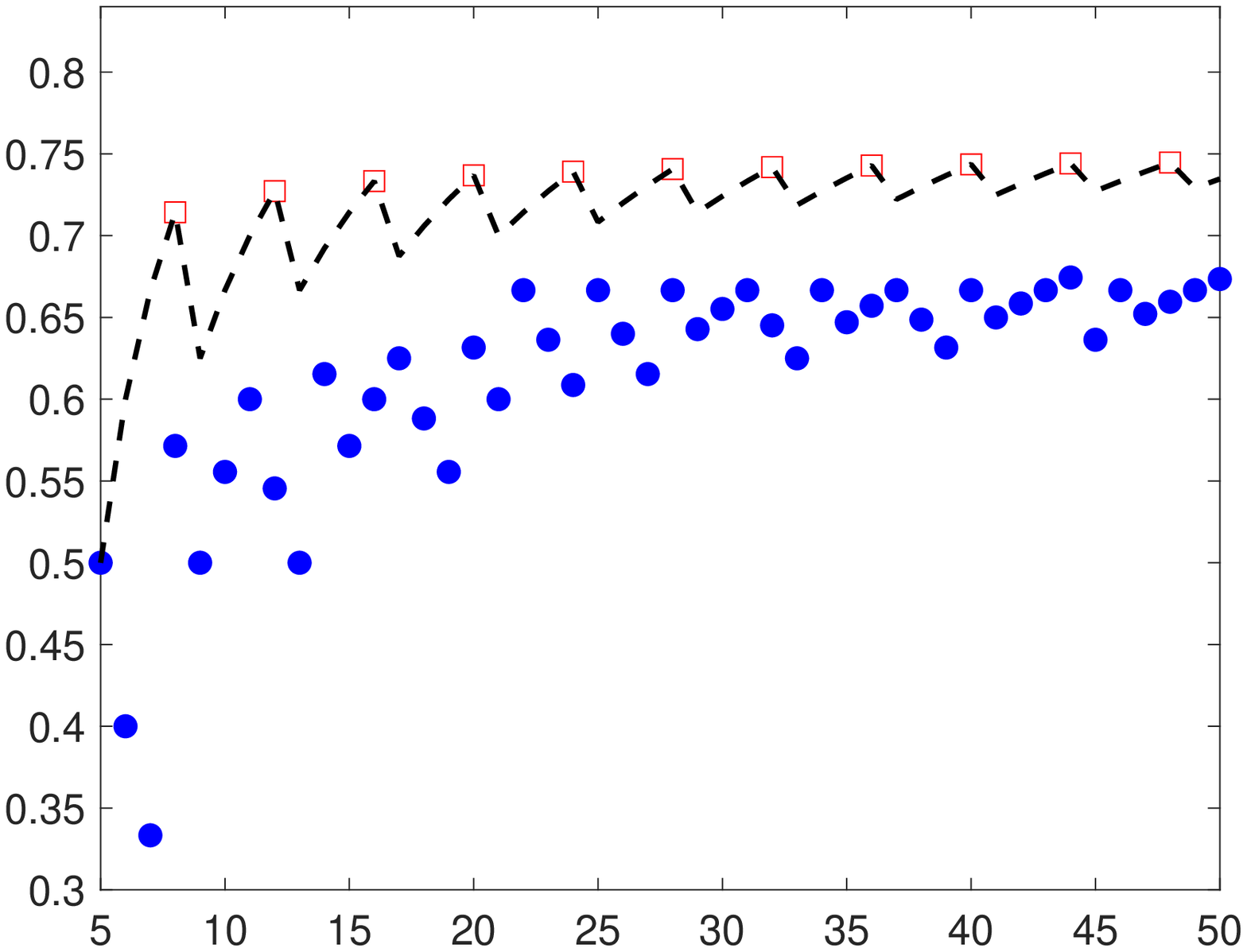}
\put(50,0) {$n$}
\put(2,35) {\rotatebox{90}{$\mu$}}
\end{overpic} 
\caption{Comparing the connectivity of the densest regular networks known to the authors that support a pattern~\cite{townsendstillmanstrogatz}  (blue dots) with the upper bound we prove (dashed line) for $5\leq n\leq 50$. Any network with connectivity above the dashed line is globally synchronizing, regardless of its wiring diagram. Note the tantalizing gap between the densest known networks that support a pattern and the upper bound, as also seen in the bounds $0.6838< \mu_c\leq 0.75$. When $n$ is a multiple of $4$, the dashed line cannot be lowered without contending with the networks represented by the red squares. These red squares correspond to a sequence of dense networks for which linear stability analysis alone cannot determine their global synchrony behavior.~\cite{townsendstillmanstrogatz}
}
\label{fig:Bound}
\end{figure}

\section{Order parameter and higher-order moments}
An important quantity in the study of Kuramoto oscillators is the so-called complex order parameter, $\rho_1$. Its magnitude $0 \leq |\rho_1| \leq 1$ measures the overall level of synchrony of the oscillators, and its argument measures their average phase.~\cite{kuramoto1984chemical, kuramoto} In geometrical terms, the complex order parameter corresponds to the centroid of the oscillators' positions on the unit circle, regarded as a subset of the complex plane. 

In the analysis below, we find it useful to look at higher-order moments $\rho_m$ of the oscillator distribution as well.~\cite{daido1992order, Perez_97, pikovsky2015dynamics, terada2020linear} These higher-order moments are sometimes called Daido order parameters. Daido~\cite{daido1992order} introduced them in his analysis of a generalization of the Kuramoto model, in which the usual coupling function $\sin(\theta_k-\theta_j)$ was replaced by a general periodic function $f(\theta_k-\theta_j)$ containing all possible Fourier harmonics. In Daido's work, and in much of the subsequent work where the higher-order moments $\rho_m$ came into play, the oscillators were assumed to be coupled all-to-all, corresponding to a complete graph. In what follows, we see that $\rho_m$ can also be helpful when analyzing networks of arbitrary topology.  

For an equilibrium $\theta = (\theta_1,\ldots,\theta_n)$, we define the  moments 
\[
\rho_m = \frac{1}{n}\sum_{j} e^{im\theta_j}, \quad m=1,2,\ldots.
\]
(From now on, we use the notation $\sum_{j}$ to mean $\sum_{j=1}^n$.)
Without loss of generality, we may assume that the complex order parameter $\rho_1$ is real-valued and non-negative. To see this, write $\rho_1 = |\rho_1|e^{i\psi}$ for some $\psi$. Then, $\hat{\theta} = (\theta_1-\psi,\ldots,\theta_n-\psi)$ is also an equilibrium of~\eqref{eq:dynamical} with the same stability properties as $\theta$ since~\eqref{eq:dynamical} is invariant under a global shift of all phases by $\psi$.  So for the rest of this paper, we assume that $\psi=0$ for any equilibrium state being considered, with $0 \leq \rho_1 \leq 1$. When $\rho_1=1$, the oscillators are all in phase, corresponding to perfect synchrony, whereas when $\rho_1\approx 0$ the pattern of phases is incoherent.

The higher-order moments $\rho_2,\rho_3,\ldots,$ reveal additional  information about an equilibrium state. When $|\rho_2|=1$, the oscillators form at most two groups: those in sync, with $\cos(\theta_j - \theta_1)=1$, and those in anti-sync, so that $\cos(\theta_j - \theta_1)=-1$. Similarly, $|\rho_m|=1$ reveals that the oscillators form at most $m$ groups. The only equilibrium for which $|\rho_m| = 1$ for all $m \geq 1$ is the all-in-phase state.  

\subsection{A lower bound on the order parameter}
We are particularly interested in the size of $\rho_1$ and $|\rho_2|$ for stable equilibrium states. For an equilibrium to be stable, its first two moments must satisfy an inequality that we now derive. The resulting inequality (and more convenient versions of it that we obtain later) play a crucial role in our proof. 

We begin by deriving a lower bound on $\rho_1$. First, note that $|\rho_m|^2$ satisfies
\[
|\rho_m|^2 = \frac{1}{n^2}\left(\sum_k e^{imk}\sum_j e^{-imj}\right) = \frac{1}{n^2}\sum_{j,k} \cos(m(\theta_k-\theta_j)). 
\]
Since $\cos^2(x-y) = \frac{1}{2}(\cos(2(x-y)) + 1)$, we have 
\begin{equation} 
\begin{aligned}
\frac{1}{n^2}\sum_{j,k} \cos^2(m(\theta_k-\theta_j)) &= \frac{1}{2n^2}\sum_{j,k} \left(\cos(2m(\theta_k-\theta_j)) + 1 \right)\\
&= \frac{1}{2} \left(1 + | \rho_{2m}|^2\right).
\end{aligned}
\label{eq:Fourier}
\end{equation} 
Ling, Xu, and Bandeira~\cite{ling2018landscape} proved (see p.~1893 of their paper) that when $\theta$ is a stable equilibrium, then  
\[
-\sum_{j,k}A_{jk}\cos(\theta_k-\theta_j) + \sum_{j,k}A_{jk}\cos^2(\theta_k-\theta_j)\leq 0
\]
and hence, by~\eqref{eq:Fourier}, we have
\begin{equation} 
\begin{aligned} 
&\sum_{j,k} (1-A_{jk})\left(\cos(\theta_k-\theta_j)-\cos^2(\theta_k-\theta_j)\right) \\
&\leq \sum_{j,k} \!\left(\cos(\theta_k-\theta_j)-\cos^2(\theta_k-\theta_j)\right) = 
\frac{n^2}{2}\!\left( 2\rho_1^2 - |\rho_2|^2 - 1 \right)\!.
\end{aligned} 
\label{eq:NeedAtEnd}
\end{equation} 
In other words, for an equilibrium point to be stable, the following lower bound on $\rho_1^2$ must hold:
\begin{align}
\rho_1^2 \geq & \frac{1+ |\rho_{2}|^2 }{2} \label{eq:lowerbound_r1} \\
& \,\,\,\,\,\,\,\,  +\frac{1}{n^2} \sum_{j,k} (1-A_{jk})\!\left(\cos(\theta_k-\theta_j)-\cos^2(\theta_k-\theta_j)\right).\nonumber
\end{align} 
But this lower bound on $\rho_1^2$ is not convenient for our purposes because it also involves the network's adjacency matrix and the stable equilibrium $\theta$. We adapt this lower bound to a more convenient one in Section~\ref{sec:FinalArgument}. To get there, we need to find a way to replace the dependence on the adjacency matrix and $\theta$ with more aggregated quantities, like $\rho_1, \rho_2$ and $\bmu$. That is the goal of the next two sections. 

\section{A large order parameter implies that a stable equilibrium is the all-in-phase state}
Intuitively, for dense networks, one expects that it is impossible to have a large order parameter $\rho_1$ associated with a stable equilibrium unless it is the all-in-phase state. The idea is that if $\rho_1$ is large, then many of the $\theta_j$'s must be close to $0$, and since the network is also dense, many of these nearly-in-phase oscillators must be connected to each other, which should make it easy for the whole network to fall into sync. One of the key steps to make this intuition precise is the following inequality:
\begin{lemma}
\label{lm:cosbound}
If $\theta$ is a stable equilibrium, then for any $1\leq j\leq n$, we have 
\begin{equation}
n\sqrt{(1-\bmu)^2 - \rho_1^2\sin^2(\theta_j)}\geq \sum_{k} (1-A_{jk})\left|\cos(\theta_k-\theta_j)\right|\geq 0.
\label{eq:cosbound}
\end{equation}
(This inequality holds when there are self-loops as $A_{jj}=1$. For networks without self-loops, the summation $\sum_{k}$ should be replaced with $\sum_{k=1,k\neq j}^{n}$.)
\end{lemma}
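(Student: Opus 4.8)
The plan is to reduce the left-hand sum to the non-neighbours of oscillator $j$ and then bound it using only the equilibrium equation at node $j$ together with the normalisation that $\rho_1$ is real. First I would set $S_j=\{k:A_{jk}=0\}$, the set of non-neighbours of $j$; since we work with self-loops, $A_{jj}=1$, so $j\notin S_j$ and the left-hand side of~\eqref{eq:cosbound} equals $\sum_{k\in S_j}|\cos(\theta_k-\theta_j)|$, which is trivially $\ge 0$. The connectivity hypothesis gives $\deg(j)=\sum_k A_{jk}\ge\bmu n$, hence $|S_j|=n-\deg(j)\le(1-\bmu)n$.

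Next I would extract the key identity from the equilibrium condition $\sum_k A_{jk}\sin(\theta_k-\theta_j)=0$. Writing $A_{jk}=1-(1-A_{jk})$ and using that $\rho_1$ is real and non-negative (so $\sum_k\sin\theta_k=0$ and $\sum_k\cos\theta_k=n\rho_1$), the angle-subtraction formula gives $\sum_k\sin(\theta_k-\theta_j)=-n\rho_1\sin\theta_j$, and therefore
\[
\sum_{k\in S_j}\sin(\theta_k-\theta_j)=-\,n\rho_1\sin\theta_j .
\]
Taking absolute values shows $n\rho_1|\sin\theta_j|\le|S_j|\le(1-\bmu)n$, so in particular the radicand $(1-\bmu)^2-\rho_1^2\sin^2\theta_j$ in~\eqref{eq:cosbound} is non-negative and the statement makes sense.

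For the main estimate I would write $|\cos(\theta_k-\theta_j)|=\sqrt{1-\sin^2(\theta_k-\theta_j)}$ and apply Jensen's inequality to the concave function $t\mapsto\sqrt{1-t^2}$ on $[-1,1]$ at the points $t=\sin(\theta_k-\theta_j)$, $k\in S_j$, to get
\[
\sum_{k\in S_j}|\cos(\theta_k-\theta_j)|\le|S_j|\sqrt{\,1-\Big(\tfrac{1}{|S_j|}\!\!\sum_{k\in S_j}\!\sin(\theta_k-\theta_j)\Big)^{2}}=\sqrt{\,|S_j|^2-n^2\rho_1^2\sin^2\theta_j\,},
\]
where the last equality uses the key identity. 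Since $0\le n^2\rho_1^2\sin^2\theta_j\le|S_j|^2\le(1-\bmu)^2n^2$ and $m\mapsto\sqrt{m^2-c}$ is non-decreasing for $m\ge\sqrt{c}$, I may enlarge $|S_j|$ to $(1-\bmu)n$, which yields $\sum_{k\in S_j}|\cos(\theta_k-\theta_j)|\le n\sqrt{(1-\bmu)^2-\rho_1^2\sin^2\theta_j}$, as required. (The degenerate case $S_j=\emptyset$ is fine: the identity then forces $\rho_1\sin\theta_j=0$ and both sides vanish.)

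The argument is short, so the only real care needed is bookkeeping: checking that the two radicands stay non-negative (both follow from $|\sum_{k\in S_j}\sin(\theta_k-\theta_j)|\le|S_j|\le(1-\bmu)n$) and getting the direction of Jensen's inequality right. I would also note that this proof uses only that $\theta$ is an equilibrium, not that it is stable; the stronger hypothesis in the statement is harmless and is presumably retained only for consistency with the other lemmas.
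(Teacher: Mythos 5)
Your proof is correct and is essentially the paper's argument: both rest on the same identity $\sum_{k}(1-A_{jk})\sin(\theta_k-\theta_j)=-n\rho_1\sin\theta_j$ obtained from the equilibrium condition and the normalization of $\rho_1$, and your single application of Jensen's inequality to $t\mapsto\sqrt{1-t^2}$ produces exactly the same bound $\sqrt{|S_j|^2-n^2\rho_1^2\sin^2\theta_j}$ that the paper gets from two applications of Cauchy--Schwarz (one for $\sin$, one for $|\cos|$). The only nit is your parenthetical on $S_j=\emptyset$: the left side of \eqref{eq:cosbound} is then $n(1-\bmu)$, not zero, but the inequality still holds trivially, and your observation that only the equilibrium property (not stability) is used is likewise true of the paper's proof.
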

\begin{proof}
Select any $j$ such that $1\leq j\leq n$. From the fact that $\theta$ is an equilibrium, we have $\sum_{k}A_{jk}\sin(\theta_k-\theta_j)=0$ and hence
\[
\sum_{k} (1-A_{jk})\sin(\theta_k-\theta_j) = - n \rho_1 \sin (\theta_j).
\] 
Since $1-A_{jk}\geq 0$ and $\sum_{k} (1-A_{jk}) \leq (n-1) (1 -\mu)= n (1 -\bmu)$, the Cauchy--Schwartz inequality shows that
\begin{equation} 
\Big( \sum_{k} (1-A_{jk})\sin(\theta_k-\theta_j) \Big)^2
\!\!\!\leq
n (1-\bmu)\!\sum_{k} \!(1-A_{jk})\sin^2(\theta_k-\theta_j).
\label{eq:SinInequality} 
\end{equation} 
By replacing $\sin^2(\theta_k-\theta_j)$ by $1-\cos^2(\theta_k-\theta_j)$ and using that 
$ \sum_{k} (1-A_{jk})  \leq n (1 -\bmu)$, we find that
\[
n^2 \rho_1^2\sin^2(\theta_j)\! \leq n^2 (1 -\bmu)^2 - n (1 -\bmu)\!\! \sum_{k}\! (1-A_{jk})\cos^2(\theta_k-\theta_j).
\]
The first inequality in the lemma follows from a similar inequality to~\eqref{eq:SinInequality}, but with $\sin$ replaced by $|\cos |$. The second inequality is trivial as each term in $\sum_{k} (1-A_{jk})\left|\cos(\theta_k-\theta_j)\right|$ is non-negative.
\end{proof}
Note that Lemma~\ref{lm:cosbound} implies that for all $1\leq j\leq n$, 
\begin{equation}
\rho_1|\sin(\theta_j) |\leq 1-\bmu
\label{eq:Inequality}
\end{equation} 
since by~\eqref{eq:cosbound} we have $(1-\bmu)^2 - \rho_1^2\sin^2(\theta_j)\geq 0$ . (An equivalent inequality also appears on p.~1895 in Ling, Xu, and Bandeira's paper.~\cite{ling2018landscape})
This allows us to conclude that if $\theta$ is a stable equilibrium associated with a large $\rho_1$, then $\theta$ is the all-in-phase state.
\begin{cor}
\label{cor:LBXstability}
Suppose that $\theta$ is a stable equilibrium such that 
$\rho_1 > \sqrt{2} (1-\bmu)$. Then $\theta$ must be the all-in-phase state.
\end{cor}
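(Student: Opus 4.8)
The plan is to push the bound $\rho_1|\sin\theta_j|\le 1-\bmu$ of~\eqref{eq:Inequality} back into the definition of $\rho_1$. Since $\sqrt2(1-\bmu)\ge 0$, the hypothesis forces $\rho_1>0$, and writing $s:=(1-\bmu)/\rho_1$ the hypothesis $\rho_1>\sqrt2(1-\bmu)$ is exactly $s<1/\sqrt2$; then~\eqref{eq:Inequality} gives $|\sin\theta_j|\le s<1/\sqrt2$ for every $j$. Hence each phase lies within $\pi/4$ of $0$ or of $\pi$ modulo $2\pi$, and I would split the oscillators into the set $P=\{j:\cos\theta_j>0\}$ (nearly in phase) and $N=\{j:\cos\theta_j<0\}$ (nearly antiphase), recording that $\cos\theta_j\ge\sqrt{1-s^2}$ on $P$, that $\cos\theta_j\le-\sqrt{1-s^2}$ on $N$, and that $\cos(\theta_k-\theta_j)\le-(1-2s^2)<0$ whenever $j\in N$ and $k\in P$ (the difference lies within $\pi/2$ of $\pi$).

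If $N=\emptyset$, all phases lie in the open arc $(-\pi/4,\pi/4)$ and the standard maximal-phase argument finishes: taking $\theta_{j^\ast}=\max_j\theta_j$, every term of $\sum_k A_{j^\ast k}\sin(\theta_k-\theta_{j^\ast})=0$ has $\theta_k-\theta_{j^\ast}\in(-\pi/2,0]$ and so is $\le 0$; hence every term vanishes, forcing $\theta_k=\theta_{j^\ast}$ for each neighbor $k$ of $j^\ast$, and connectedness of the network then makes all phases equal. Because $\rho_1>0$ this common value is $0$, i.e.\ $\theta$ is the all-in-phase state.

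The content of the corollary is therefore to rule out $N\neq\emptyset$, and here I would use stability a second time, through the fact that the Hessian of the potential $E(\theta)=-\tfrac12\sum_{j,k}A_{jk}\cos(\theta_k-\theta_j)$ is positive semidefinite at a stable equilibrium. Evaluating this quadratic form on the perturbation that is $1$ on $N$ and $0$ on $P$ makes all intra-$N$ terms cancel and collapses to $|N|+\sum_{j\in N,\,k\in P}A_{jk}\cos(\theta_k-\theta_j)\ge 0$; combined with the estimate $\cos(\theta_k-\theta_j)\le-(1-2s^2)$ on $N\times P$ this bounds the number of $N$--$P$ edges by $|N|/(1-2s^2)$. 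On the other hand the minimum-degree condition forces every node of $N$ to have at least $\bmu n-|N|$ neighbors in $P$, so that edge count is at least $|N|(\bmu n-|N|)$; comparing the two estimates pins $|P|=n-|N|$ down to at most roughly $(1-\bmu)n$, i.e.\ almost the entire network lies in the antiphase cluster $N$. Feeding this into $n\rho_1=\sum_{j\in P}\cos\theta_j+\sum_{j\in N}\cos\theta_j\le|P|-|N|\sqrt{1-s^2}$ then makes $\rho_1$ come out below $(1-\bmu)-\bmu\sqrt{1-s^2}$, which since $\sqrt{1-s^2}>1/\sqrt2$ is strictly less than $\sqrt2(1-\bmu)$ --- contradicting the hypothesis. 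The main obstacle is exactly this quantitative closing step: extracting a strong enough bound on the size of the antiphase cluster and then checking that the inequalities still close with room to spare; the $O(1)$ lower-order corrections (from self-loops, and from the node degrees being integers) are harmless but are the reason it is cleaner to argue with $\bmu$, or equivalently in the twinning limit $n\to\infty$, rather than with $\mu$ directly.
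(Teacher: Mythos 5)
Your first step is exactly the paper's: \eqref{eq:Inequality} gives $|\sin\theta_j|\le(1-\bmu)/\rho_1<1/\sqrt2$ for all $j$. At that point the paper simply stops and cites Proposition~5 of Ling, Xu, and Bandeira, which is precisely the statement that a stable equilibrium with all $|\sin\theta_j|<1/\sqrt 2$ must be in phase; everything after your first paragraph is an attempt to reprove that proposition, which is a legitimate (and more self-contained) route, and your test direction --- the indicator of the antiphase set $N$ in the Hessian quadratic form --- is the right idea.

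The gap is the one you flag yourself, and it is caused by an error in your Hessian identity. For $E(\theta)=-\tfrac12\sum_{j,k}A_{jk}\cos(\theta_k-\theta_j)$ one has, at any equilibrium,
\begin{equation*}
v^{T}\nabla^2E(\theta)\,v \;=\; \tfrac12\sum_{j,k}A_{jk}\cos(\theta_k-\theta_j)\,(v_j-v_k)^2 ,
\end{equation*}
so taking $v$ to be the indicator of $N$ yields $\sum_{j\in N,\,k\in P}A_{jk}\cos(\theta_k-\theta_j)\ge 0$ with \emph{no} additive $|N|$ term (diagonal and intra-cluster contributions vanish identically because $(v_j-v_k)^2=0$ there). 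The spurious $+|N|$ is what forces you into the bound $e(N,P)\le |N|/(1-2s^2)$, whose right-hand side blows up as $s\to 1/\sqrt2$ and prevents the final inequalities from closing. With the correct identity, every cross term satisfies $\cos(\theta_k-\theta_j)\le-(1-2s^2)<0$, so the only way the sum can be nonnegative is $e(N,P)=0$; if $N\neq\emptyset$ this contradicts connectedness of the network (every node of $N$ would have all of its at least $\bmu n$ neighbors inside $N$, and every node of $P$ all of its neighbors inside $P$), so $N=\emptyset$ and your maximal-phase argument finishes. In short: the approach is sound and does close, but only after the quadratic form is computed correctly; as written, the quantitative step you identify as the ``main obstacle'' genuinely does not go through. (Also note that connectedness of the network, which your $N=\emptyset$ case uses, is a standing assumption of the paper rather than something the hypothesis $\rho_1>\sqrt2(1-\bmu)$ provides.)
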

\begin{proof}
By~\eqref{eq:Inequality}, we see that 
$|\sin(\theta_j)|\leq (1-\bmu)/\rho_1$ 
for all $j$. 
Since $\rho_1> \sqrt{2}(1-\bmu)$, 
we also have that $|\sin(\theta_j)|<1/\sqrt{2}$ for all $j$. Therefore, $\theta$ must be the all-in-phase state, from Proposition 5 of the paper by Ling, Xu, and Bandeira.~\cite{ling2018landscape} 
\end{proof}
Corollary~\ref{cor:LBXstability} makes our intuition precise and shows us that the only way $\rho_1$ can be large for a stable equilibrium is if it is the all-in-phase state.  

\section{For dense networks the all-in-phase state is the only stable equilibrium}\label{sec:FinalArgument}
We now set out to show that if $\bmu>3/4$, then $\rho_1>\sqrt{2} (1-\bmu)$ for all stable equilibria, which by Corollary~\ref{cor:LBXstability} guarantees that the dense network is globally synchronizing. 

Since $|1 - \cos(\theta) |  \leq 2 $, we find that 
$|\cos(x) - \cos^2(x)| = |\cos(x)(1-\cos(x)) |  \leq 2|\cos(x)|$ and hence, by Lemma~\ref{lm:cosbound}, 
\[
\begin{aligned}
\sum_k &(1-A_{jk}) (\cos(\theta_k-\theta_j)-\cos^2(\theta_k-\theta_j)) \\
&\geq -2n\sqrt{(1-\bmu)^2 - \rho_1^2\sin^2(\theta_j)}
\end{aligned}
\]
for all $1\leq j\leq n$.  Putting this together with~\eqref{eq:lowerbound_r1} leads to the following lower bound on $\rho_1$: 
\begin{equation}
\rho_1^2 \geq \frac{1+|\rho_2|^2}{2} -\frac{2}{n}\sum_{j}\sqrt{(1-\bmu)^2 - \rho_1^2\sin^2(\theta_j)}.
\label{eq:r1bound}    
\end{equation}
This is a more convenient lower bound on $\rho_1^2$ than~\eqref{eq:lowerbound_r1} because~\eqref{eq:r1bound} does not depend on the topology of the network through the adjacency matrix. However, we need to go further and remove the dependency on the $\theta_j$'s. 

If we use the fact that $\rho_1^2\sin^2(\theta_j)\geq 0$, then we find
\begin{equation}
\rho_1^2 \geq 2\left(\bmu - \frac{3}{4}\right) + \frac{1}{2}|\rho_{2}|^2.
\label{eq:BLXbound}    
\end{equation}
The inequality in~\eqref{eq:BLXbound} also implies the one found by Ling, Xu, and Bandeira~\cite{ling2018landscape} (see [5.4] in their paper) by taking the trivial lower bound of $|\rho_2|^2\geq 0$. (To see this, replace $\bmu$ by $(\mu(n-1)+1)/n$ and $\rho_1$ by $\frac{1}{n}\sum_{j}e^{i\theta_j}$.)

To improve the bound obtained by Ling, Xu, and Bandeira,~\cite{ling2018landscape} we now seek a non-trivial lower bound on $|\rho_2|$.
\begin{lemma}
\label{lm:r2bound}
Let $\theta$ be a stable equilibrium with $\rho_1>0$. Then, for all 
$0\leq x_0 \leq \min\{1,(1-\bmu)^2/\rho_1^2\}$, 
the following inequality holds: 
\begin{equation}
|\rho_2| \geq a + b \left( 1+ |\rho_{2}|^2 - 2 \rho_1^2 \right),
\label{eq:r2bound}
\end{equation}
with
\begin{equation}
a = 1+2x_0 - 4\frac{(1-\bmu)^2}{\rho_1^2},
\quad
b = \frac{\sqrt{(1-\bmu)^2-\rho_1^2x_0}}{\rho_1^2}.
\label{eq:abfromx0}
\end{equation}
\end{lemma}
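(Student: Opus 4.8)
The plan is to combine three facts: the stability inequality~\eqref{eq:NeedAtEnd}, the pointwise estimate of Lemma~\ref{lm:cosbound}, and the concavity of a square root, with the free parameter $x_0$ serving as the point at which the square root is linearized. Set $Q:=1+|\rho_2|^2-2\rho_1^2$, so that~\eqref{eq:NeedAtEnd} reads $\sum_{j,k}(1-A_{jk})\bigl(\cos^2(\theta_k-\theta_j)-\cos(\theta_k-\theta_j)\bigr)\ge \tfrac{n^2}{2}Q$. First I would pass from $\cos^2-\cos$ to $2|\cos|$ via the elementary bound $2|\cos t|\ge\cos^2 t-\cos t$ (valid for all real $t$, since $|\cos t|\ge\cos^2 t$ and $|\cos t|\ge-\cos t$), obtaining $\sum_{j,k}(1-A_{jk})|\cos(\theta_k-\theta_j)|\ge \tfrac{n^2}{4}Q$. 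Summing the first inequality of Lemma~\ref{lm:cosbound} over $j$ gives $n\sum_j\sqrt{(1-\bmu)^2-\rho_1^2\sin^2\theta_j}\ge\sum_{j,k}(1-A_{jk})|\cos(\theta_k-\theta_j)|$, so that $\tfrac1n\sum_j\sqrt{(1-\bmu)^2-\rho_1^2\sin^2\theta_j}\ge\tfrac14 Q$.

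Let $g(s):=\sqrt{(1-\bmu)^2-\rho_1^2 s}$. By~\eqref{eq:Inequality} each $\sin^2\theta_j$, and by hypothesis $x_0$ as well, lies in the interval $[0,\min\{1,(1-\bmu)^2/\rho_1^2\}]$, on which $g$ is concave; hence $g(\sin^2\theta_j)$ is bounded above by the value at $\sin^2\theta_j$ of the tangent line to $g$ at $x_0$, which has slope $-1/(2b)$ and value $\rho_1^2 b$ at $x_0$, with $b$ as in~\eqref{eq:abfromx0}. Averaging over $j$ and using $\tfrac1n\sum_j\sin^2\theta_j=\tfrac12(1-\operatorname{Re}\rho_2)$ converts the previous display into a linear inequality in $\operatorname{Re}\rho_2$; a short rearrangement — in which $\rho_1>0$ makes $a,b$ well defined and supplies the identity $\rho_1^2b^2=(1-\bmu)^2/\rho_1^2-x_0$ — yields $\operatorname{Re}\rho_2\ge a+bQ$. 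Since $|\rho_2|\ge\operatorname{Re}\rho_2$, this is exactly~\eqref{eq:r2bound}. Note that the $|\rho_2|^2$ term inside $Q$ is carried along untouched throughout, matching the form of the claimed inequality.

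The two elementary inequalities and the final algebra are routine; the step that needs care is the linearization — confirming that every $\sin^2\theta_j$, together with the chosen $x_0$, genuinely lies in the interval $[0,\min\{1,(1-\bmu)^2/\rho_1^2\}]$ on which $g$ is concave, which is precisely what~\eqref{eq:Inequality} and the constraint $0\le x_0\le\min\{1,(1-\bmu)^2/\rho_1^2\}$ provide. The boundary choice $x_0=(1-\bmu)^2/\rho_1^2$ makes $b=0$ and the tangent vertical; there~\eqref{eq:r2bound} reduces to $|\rho_2|\ge 1-2(1-\bmu)^2/\rho_1^2$, which follows at once from $\tfrac1n\sum_j\sin^2\theta_j=\tfrac12(1-\operatorname{Re}\rho_2)$ and~\eqref{eq:Inequality}, so this case can be handled separately or recovered by letting $x_0\uparrow(1-\bmu)^2/\rho_1^2$.
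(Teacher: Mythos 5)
Your proof is correct and follows essentially the same route as the paper's: the inequality $\tfrac1n\sum_j\sqrt{(1-\bmu)^2-\rho_1^2\sin^2\theta_j}\ge\tfrac14(1+|\rho_2|^2-2\rho_1^2)$ is exactly the paper's~\eqref{eq:r1bound}, and your tangent-line bound on the concave function $g$ at $x_0$, combined with $\operatorname{Re}\rho_2=1-\tfrac2n\sum_j\sin^2\theta_j$, is the same tangency construction the paper uses (just written as an upper bound on $g$ rather than a lower bound on $1-2x$). Your explicit treatment of the degenerate case $b=0$ is a nice touch the paper glosses over.
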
 
\begin{proof} 
Since $\cos(2\theta_j) = 1-2\sin^2(\theta_j)$, we have
\[
n|\rho_2|\geq \left|{\rm Re}(n\rho_2)\right|  = \left|\sum_j \cos(2\theta_j)\right| \geq \sum_j (1-2\sin^2(\theta_j)). 
\]
Looking at~\eqref{eq:r1bound} for inspiration to derive a useful expression, we seek a bound of the form:  
\begin{equation}
1-2\sin^2(\theta_j) \geq  a  + 4b\sqrt{(1-\bmu)^2 - \rho_1^2\sin^2(\theta_j)}.
\label{eq:sininquality}
\end{equation}
Since we would like to have~\eqref{eq:sininquality} hold for any possible $\theta_j$, if we write $x = \sin^2 (\theta_j)$ for brevity, then we need
\begin{equation}
1-2x \geq a  + 4 b\sqrt{(1-\bmu)^2 - \rho_1^2x}
\label{eq:ABbound}
\end{equation}
to hold for all 
$0 \leq x \leq \min\{1,(1-\bmu)^2/\rho_1^2\}$. (Note that we can restrict $x$ by~\eqref{eq:Inequality} and the expression for $x$.)
A simple calculation shows that for the choice of $a$ and $b$ in~\eqref{eq:abfromx0}, the graphs of 
\[
f(x) = 1-2x 
\quad \mbox{and} \quad
g(x) = a  + 4 b\sqrt{(1-\bmu)^2 - \rho_1^2x}
\] 
intersect tangentially at $x=x_0$, i.e., $f(x_0)=g(x_0)$ and $f'(x_0)=g'(x_0)$. Moreover, the concavity of the function 
$x\to ((1-\bmu)^2 - \rho_1^2x)^{1/2}$ guarantees that the inequality in~\eqref{eq:ABbound} holds for all 
required values of $x$. We conclude that 
\[
|\rho_2|\geq a + 4b\sum_{j}\sqrt{ (1-\bmu)^2 - \rho_1^2\sin^2(\theta_j)}.
\]
The statement of the lemma follows from~\eqref{eq:r1bound}.
\end{proof} 

It is essential in Lemma~\ref{lm:r2bound} that we assume that $\rho_1>0$ as when $\rho_1 = 0$ one can also have $|\rho_2|=0$. 
What is surprising to us is that even a very small $\rho_1>0$ can lead to a useful lower bound on $|\rho_2|$. In fact, simplifying and then optimizing (over $x_0$) the inequality in Lemma~\ref{lm:r2bound}, we can show that $|\rho_2|\geq 1/2$ for networks with $\bmu>3/4$.

\begin{lemma}
\label{lm:newoptimize}
Suppose that $\bmu > 3/4$ and $\theta$ is a stable equilibrium. Then  $|\rho_2| \geq 1/2$.
\end{lemma}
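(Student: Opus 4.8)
The plan is to exploit Lemma~\ref{lm:r2bound} with the two remaining free quantities: the parameter $x_0$ and the true (but unknown) value of $\rho_1$. Since Corollary~\ref{cor:LBXstability} lets us assume $0<\rho_1\le\sqrt{2}(1-\bmu)$ throughout (otherwise $\theta$ is the all-in-phase state, for which $|\rho_2|=1\ge 1/2$ trivially), we may substitute this restriction freely. Writing $\epsilon = 1-\bmu$ so that $0<\epsilon<1/4$, and setting $r=\rho_1^2\in(0,2\epsilon^2]$, $s=|\rho_2|$, inequality~\eqref{eq:r2bound} reads
\[
s \;\ge\; 1 + 2x_0 - \frac{4\epsilon^2}{r} \;+\; \frac{\sqrt{\epsilon^2 - r x_0}}{r}\,\bigl(1 + s^2 - 2r\bigr),
\]
valid for every $0\le x_0\le \min\{1,\epsilon^2/r\}$. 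The goal is to show this forces $s\ge 1/2$.

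The key step is to make the right choice of $x_0$ as a function of $r$. The natural move is to kill the dependence on the sign of $1+s^2-2r$: if we could choose $x_0$ so that $\sqrt{\epsilon^2 - r x_0}/r$ equals some convenient constant, the inequality becomes a clean quadratic in $s$. Concretely, I would try $x_0$ chosen so that $b=\sqrt{\epsilon^2-rx_0}/r$ takes a value that makes the constant term $a=1+2x_0-4\epsilon^2/r$ work in our favour; one clean option is to pick $x_0$ with $\epsilon^2 - r x_0 = \tfrac14 r^2$ (i.e. $b=\tfrac12$), which is legitimate provided $x_0=(\epsilon^2/r)-\tfrac14 r\ge 0$, that is $r\le 2\epsilon$ — and this holds since $r\le 2\epsilon^2<2\epsilon$. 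With $b=\tfrac12$ the bound becomes
\[
s \;\ge\; 1 + 2x_0 - \frac{4\epsilon^2}{r} + \tfrac12\bigl(1+s^2-2r\bigr)
\;=\; \tfrac32 + \tfrac12 s^2 - r - \frac{2\epsilon^2}{r} - \tfrac12 r,
\]
using $2x_0 = 2\epsilon^2/r - \tfrac12 r$. Rearranging gives $s^2 - 2s + 3 - 3r - 4\epsilon^2/r \ge 0$, i.e. $(s-1)^2 \ge 3r + 4\epsilon^2/r - 2$. By AM--GM, $3r + 4\epsilon^2/r \ge 2\sqrt{12}\,\epsilon = 4\sqrt3\,\epsilon$, but this lower bound is too weak near $r\to 0$ where $4\epsilon^2/r\to 0$ is false — rather $4\epsilon^2/r\to\infty$, so actually the right-hand side blows up and the inequality is vacuous for tiny $r$. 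This signals that $b=\tfrac12$ is the wrong constant for small $r$; instead I expect the correct argument optimizes $x_0$ to exploit the fact that $r$ itself is small (at most $2\epsilon^2$), so $4\epsilon^2/r \ge 2$, which is the crucial quantitative input.

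So the refined plan: use $r\le 2\epsilon^2$ to bound $4\epsilon^2/r\ge 2$, and choose $x_0 = \epsilon^2/r$ (the maximum allowed, legal since $\epsilon^2/r\ge\epsilon^2/(2\epsilon^2)=\tfrac12\le 1$ when $r\ge 2\epsilon^2$... wait, we need $x_0\le 1$, and $\epsilon^2/r$ may exceed $1$ when $r<\epsilon^2$; in that regime take $x_0=1$ instead). With $x_0=\epsilon^2/r$ we get $b=0$ and $a = 1+2\epsilon^2/r - 4\epsilon^2/r = 1 - 2\epsilon^2/r \le 1 - 1 = 0$, which is useless. With $x_0=1$ (the case $r\le\epsilon^2$, forcing $\rho_1^2\le\epsilon^2$ so $\rho_1\le\epsilon=1-\bmu$, even stronger), $a = 3 - 4\epsilon^2/r$ and $b=\sqrt{\epsilon^2-r}/r$ — but $\epsilon^2-r\ge 0$ here so this is real. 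The honest approach is to treat $x_0$ as a genuine optimization variable and show $\max_{x_0} [\text{RHS}]\ge 1/2$ for all admissible $(\epsilon,r)$; I would differentiate the RHS in $x_0$, find the interior optimum, plug back, and reduce to a one-variable inequality in the ratio $t=\epsilon^2/r\ge\tfrac12$, which should be checkable by elementary calculus. \textbf{The main obstacle} I anticipate is precisely this final reduction: after substituting the optimal $x_0$ the inequality becomes an implicit quadratic in $|\rho_2|$ with coefficients that are awkward functions of $t$ and $\epsilon$, and one must verify its smaller root is $\le 1/2$ (equivalently that $|\rho_2|=1/2$ violates the bound) uniformly over the two-parameter admissible region $\epsilon\in(0,\tfrac14)$, $t\ge\tfrac12$ — monotonicity in $\epsilon$ should collapse it to the worst case, but pinning that down cleanly is the delicate part.
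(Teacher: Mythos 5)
Your overall strategy---apply Lemma~\ref{lm:r2bound} and optimize over $x_0$---is the paper's strategy, but the proposal stops exactly where the proof has to happen: you acknowledge that the final reduction is left undone, and the exploratory choices you do compute ($b=\tfrac12$, $x_0=\epsilon^2/r$, $x_0=1$) are all abandoned. (In the $b=\tfrac12$ computation the inequality sign is also flipped: rearranging $s\ge \tfrac32+\tfrac12 s^2-\tfrac32 r-2\epsilon^2/r$ gives $(s-1)^2\le 3r+4\epsilon^2/r-2$, not $\ge$.)

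Two ideas that make the optimization tractable are missing. First, since $b\ge 0$ you may simply discard the $b|\rho_2|^2$ term, so the bound to be optimized is $|\rho_2|\ge a+b(1-2\rho_1^2)$---linear in the unknowns, not the ``implicit quadratic in $|\rho_2|$'' you anticipate. The interior optimum is then $x_0^*=\frac{(1-\bmu)^2}{\rho_1^2}-\frac{(1-2\rho_1^2)^2}{16\rho_1^2}$ and the optimal value collapses to $1-2x_0^*$; the target $1-2x_0^*\ge\tfrac12$ is equivalent to $4\rho_1^4\ge 16(1-\bmu)^2-1$, whose right-hand side is negative for $\bmu>3/4$, so no delicate two-parameter analysis over $(\epsilon,t)$ is needed. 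Second, to certify that $x_0^*$ is admissible (in particular $x_0^*\ge 0$) one needs a \emph{lower} bound on $\rho_1^2$, namely $\rho_1^2\ge 2(\bmu-3/4)$ from~\eqref{eq:BLXbound}; your proposal never invokes~\eqref{eq:BLXbound} and only imports the \emph{upper} bound $\rho_1^2\le 2(1-\bmu)^2$ from Corollary~\ref{cor:LBXstability}, which is not what is needed here (the paper's case split is merely on the sign of $1-2\rho_1^2$, handled by Corollary~\ref{cor:LBXstability} when it is negative). Without the lower bound you would have to treat separately the regime where $x_0^*<0$, e.g.\ by evaluating the bound at $x_0=0$ and showing it forces $|\rho_2|>1$, a contradiction; that is doable but is an extra case your sketch does not address.
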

\begin{proof}
First, note that if $1-2\rho_1^2<0$ then we have $\rho_1> \sqrt{2}/2> 2\sqrt{2}(1-\bmu)$ as $\bmu>3/4$. Therefore, by Corollary~\ref{cor:LBXstability}, $\theta$ must be the all-in-phase state and $\rho_2=1$. Thus for the remainder of this proof, we assume that $1-2\rho_1^2\geq 0$. 

When $1-2\rho_1^2\geq 0$, we show that the inequalities~\eqref{eq:BLXbound} and~\eqref{eq:r2bound} with $a$ and $b$ given
in~\eqref{eq:abfromx0} cannot both be satisfied for all $0<x_0\leq \min\{1,(1-\bmu)^2/\rho_1^2\}$ unless $|\rho_2|\geq1/2$.
Since $|\rho_2|^2\geq 0$ we have $\rho_1^2\geq 2(\bmu - 3/4)>0$ from~\eqref{eq:BLXbound}, and $|\rho_2|\geq a +b(1-2\rho_1^2)$ from~\eqref{eq:r2bound}.
A simple calculation shows
that the value of $x_0$ that optimizes the lower bound $|\rho_2|\geq a +b(1-2\rho_1^2)$, where $a$ and $b$ are in~\eqref{eq:abfromx0} is given by
\[
x_0^* = \frac{(1-\bmu)^2}{\rho_1^2} - \frac{\left(1 - 2\rho_1^2\right)^2}{16\rho_1^2}.
\]
Clearly, $x_0^*\leq (1-\bmu)^2/\rho_1^2$ and we find that $x_0^*\geq 0$ because $\rho_1^2\geq 2(\bmu - 3/4)$. Moreover, $x_0^*<1$ since $16(1-\bmu)^2 - (1-2\rho_1^2)^2< 16\rho_1$ for $3/4<\bmu\leq 1$. For this valid choice of $x_0^*$, we find that
\[
|\rho_2| \geq 1 - \frac{2(1-\bmu)^2}{\rho_1^2} + \frac{(1-2\rho_1^2)^2}{8\rho_1^2} = 1 - 2x_0^*.
\]
The statement of the lemma follows by noting that $1-2x_0^*\geq 1/2$ if and only if $\rho_1^4 \geq (16(1-\bmu)^2 - 1)/4$. This last inequality holds as $(16(1-\bmu)^2 - 1)/4$ is negative when $\bmu>3/4$. 
\end{proof}

Finally, we are ready to prove our main result. 
\begin{thm}
If \thinspace $\bmu > 3/4$, then the only stable equilibrium is the all-in-phase state.
\label{thm:MainResult}
\end{thm}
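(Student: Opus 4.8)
The plan is to assemble the theorem from the three facts already established, with essentially no new work beyond one scalar inequality. Let $\theta$ be any stable equilibrium. \textbf{Step 1:} Since $\bmu>3/4$, Lemma~\ref{lm:newoptimize} gives $|\rho_2|\ge 1/2$. \textbf{Step 2:} Substitute this into the lower bound~\eqref{eq:BLXbound}, namely $\rho_1^2\ge 2(\bmu-3/4)+\tfrac12|\rho_2|^2$, to obtain $\rho_1^2\ge 2(\bmu-3/4)+\tfrac18$. \textbf{Step 3:} Show that this beats the threshold needed to apply Corollary~\ref{cor:LBXstability}, i.e., that $2(\bmu-3/4)+\tfrac18>2(1-\bmu)^2$ whenever $3/4<\bmu\le 1$; then $\rho_1>\sqrt2\,(1-\bmu)$ and Corollary~\ref{cor:LBXstability} forces $\theta$ to be the all-in-phase state, which is what we want.

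The only computation is the inequality in Step 3. Writing $h(\bmu)=2(\bmu-3/4)+\tfrac18-2(1-\bmu)^2=-2\bmu^2+6\bmu-\tfrac{27}{8}$, one checks directly that $h(3/4)=0$ and $h'(\bmu)=6-4\bmu>0$ on $(3/4,1]$, so $h(\bmu)>0$ throughout that interval, giving the strict bound $\rho_1^2>2(1-\bmu)^2$. This closes the argument.

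I do not expect any serious obstacle here: the hard analysis was already done in Lemma~\ref{lm:r2bound} (the tangent-line trick producing a lower bound on $|\rho_2|$) and Lemma~\ref{lm:newoptimize} (optimizing over $x_0$), so the theorem itself is just bookkeeping. The one feature worth flagging is the equality $h(3/4)=0$: the chain of inequalities is tight precisely at $\bmu=3/4$, which is why this route cannot yield anything stronger than $\mu_c\le 3/4$ and dovetails with the razor's-edge discussion in the Introduction.
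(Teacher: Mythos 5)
Your proof is correct and follows essentially the same route as the paper: Lemma~\ref{lm:newoptimize} gives $|\rho_2|\ge 1/2$, inequality~\eqref{eq:BLXbound} then bounds $\rho_1^2$ from below, and Corollary~\ref{cor:LBXstability} finishes. The only cosmetic difference is that the paper drops the nonnegative term $2(\bmu-3/4)$ and simply checks $1/8>2(1-\bmu)^2$, whereas you retain it; both verifications are equivalent.
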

\begin{proof}
By Lemma~\ref{lm:newoptimize}, we know that $|\rho_2| \geq 1/2$. By~\eqref{eq:BLXbound}, we find that $\rho_1^2\geq \frac{1}{2}|\rho_2|^2$ for $\bmu>3/4$. Thus, $\rho_1^2\geq 1/8$. 
To conclude the proof, we just need to ensure that this implies that $\rho_1> \sqrt{2}(1-\bmu)$ (see Corollary~\ref{cor:LBXstability}). One can easily see that 
$1/8> 2 (1-\bmu)^2$ when $\bmu>3/4$.
\end{proof}
Theorem~\ref{thm:MainResult} says that any network with self-loops and connectivity $\bmu>0.75$ must be globally synchronizing. As $\bmu = (\mu(n-1)+1)/n$, we know that any network of size $n$ without self-loops and $\mu>(0.75n-1)/(n-1)$ is also guaranteed to be globally synchronizing (see Section~\ref{sec:twinning}). Finally, since $(0.75n-1)/(n-1)\rightarrow 0.75$ as $n\rightarrow \infty$ from below, any network without self-loops and connectivity $\mu \geq 0.75$ cannot support a pattern. We conclude that $\mu_c\leq 0.75$. 

\section{Networks with connectivity just below three-quarters}\label{sec:JustBelow}
This section discusses what our argument can say about the stable equilibria of dense networks whose connectivity is just below three-quarters. We will roughly sketch an argument here without attempting to make it precise.

At first glance, it might seem that we cannot say anything because when $\bmu \leq 0.75$ the inequalities in~\eqref{eq:BLXbound} and~\eqref{eq:r2bound} can both be satisfied with $\rho_1 = 0$ and $|\rho_2|=0$. However, we can describe the possible stable equilibria when $\bmu$ is just below $0.75$. For example, when $\bmu \geq 0.7495$, 
one can see that if both the inequalities in~\eqref{eq:BLXbound} and~\eqref{eq:r2bound} are satisfied then either: (i) $\rho_1 > 0.7065$ 
or (ii) $\rho_1 < 0.03166$ and $|\rho_2| < 0.04474$. (These bounds are computed by searching over $(\rho_1,\rho_2)\in [0,1]^2$, optimizing for $x_0$ in Lemma~\ref{lm:r2bound}, and checking to see when both~\eqref{eq:BLXbound} and~\eqref{eq:r2bound} are satisfied.) 
   
In case (i), we see that $\rho_1>\sqrt{2}(1-\bmu) \geq 0.35$ so by Corollary~\ref{cor:LBXstability} this situation corresponds to the all-in-phase state. Case (ii) is more interesting as it represents the possibility of a stable pattern. From~\eqref{eq:NeedAtEnd} and our bounds on $\rho_1$ and $|\rho_2|$, we find that  
\begin{equation} 
\begin{aligned} 
\sum_{j,k} (1-A_{jk}) & \left(\cos(\theta_k-\theta_j)-\cos^2(\theta_k-\theta_j)\right) \\
& \leq -0.49900 n^2 \leq -1.9921(1 - \bmu)n^2.
\end{aligned} 
\label{eq:NearlyLower}
\end{equation} 
Since $\cos(\theta_k-\theta_j)-\cos^2(\theta_k-\theta_j)\geq -2$ and $\sum_{j,k} (1-A_{jk}) = (1-\bmu)n^2$, the average contribution in~\eqref{eq:NearlyLower} from two oscillators $j$ and $k$ that are not connected is at most $-1.9921$. This means that in the vast majority of cases where one has $A_{jk} = 0$, we also have $\cos(\theta_j-\theta_k)\approx -1$, so oscillators $j$ and $k$ must be  nearly anti-synchronized with a phase difference of roughly $\pi$ between them. 

Therefore, there are at least two clusters of oscillators in the network of size $\geq 0.249n$ that are nearly in anti-sync. Inside the clusters, the phases differ by at most $0.146$ radians (about $8.4$ degrees). The bound for the size of the size of the two clusters comes from $1.9921(1-\bmu)/2$, and the phase spread comes from doubling the smallest positive root of $\cos(\pi- \phi) - \cos^2(\pi-\phi) = -1.9921$. The bounds on $\rho_1$ and $|\rho_2 |$ in~\eqref{eq:BLXbound} and~\eqref{eq:r2bound} further imply that there are two more clusters of size $\geq 0.249n$ with phases that are shifted by approximately $\pi/2$ compared to the other pair of clusters. This is because the first identified pair of clusters makes a significant contribution to $|\rho_2|$ but $|\rho_2|$ is tiny.  Therefore, the only way for $|\rho_2|$ to remain small is if there is another pair of clusters that approximately cancel out the contribution of the first pair. Thus, for networks with $\bmu > 0.7495$, the vast majority of the oscillators fall into four clusters with phases that are at $\phi$, $\phi+\pi/2$, $\phi+\pi$, and $\phi+3\pi/2$ for some $\phi$. In addition, there are at most $n/250$ rogue oscillators in the network that do not fit into those four clusters.

The upshot of this argument is that any network with connectivity $0.7495<\bmu\leq 0.75$ can either be globally synchronizing or can support a particular pattern of the type we have just described. All our attempts to construct a network in this regime with such a pattern have not been successful so far. We believe that such networks do not support \emph{linearly stable} patterns of this type (or any other type), but we are currently unable to rule them out rigorously. If no such networks or patterns exist, or if the patterns do exist but are only weakly (nonlinearly) stable, then this would be surprising. Indeed, it would suggest something remarkable: that the gap between the lower and upper bound $0.6838< \mu_c \leq 0.75$ might not be bridgeable by linear stability analysis alone, because of a minefield of patterns on their own razor's edge of stability. 

\appendix




\begin{acknowledgments}
We thank Lee DeVille for pointing out the connection between self-loops and twinning in the summer of 2020, which we used here to simplify the presentation of our argument. We also thank Federico Fuentes for comments on a draft. Research supported by Simons Foundation Grant 713557 to M.~K., NSF Grants No.~DMS-1513179 and No.~CCF-1522054 to S.~H.~S., and NSF Grants No.~DMS-1818757, No.~DMS-1952757, and No.~DMS-2045646 to A.~T. 
\end{acknowledgments}

\section*{Data Availability}
The data that supports the findings of this study are almost all available within the article. Any data that is not available can be found from the corresponding author upon reasonable request.


\end{document}